\newtheorem{theorem}{Theorem}[section]
\newtheorem{lemma}[theorem]{Lemma}
\numberwithin{equation}{section}
\newcommand{\commentOut}[1]{}
\begin{document}

\title{The Distribution of Ramsey Numbers}         

\author{Lane Clark}
\address{Lane Clark: Department of Mathematics, 
Southern Illinois University, Carbondale, IL 62901-4408}
\email{lclark@math.siu.edu } 

\author{Frank Gaitan}
\address{Frank Gaitan: Laboratory for Physical Sciences,
8050 Greenmead Drive, College Park, MD 20740}
\email{fgaitan@lps.umd.edu} 

\subjclass[2010]{Primary 05D10, 05A16}

\date{}          

\keywords{Ramsey numbers, distribution}

\begin{abstract}
We prove that the number of integers in the interval $[0,x]$ that are non-trivial Ramsey numbers $r(k,n)$ 
\,$(3 \le k \le n)$ has order of magnitude $\sqrt{x \ln x}$\,.
\end{abstract}

\maketitle

\section{Introduction}      

Suppose function ${\frak f} : A \to \mathbb{N}$ where $A \subseteq \mathbb{N}^k$\,.
Then $\frak f$ has {\it distribution function}  
$D_{\frak f} : [0,\infty) \to \mathbb{N}$ defined by $D_{\frak f} (x) = \#{\frak f}(A) \cap [0,x]$\,. Here
$D_{\frak f}(x)$ counts each value of ${\frak f}$ at most $x$ precisely once ignoring its multiplicity. Related is the
{\it distribution function} $M_{\frak f} : [0,\infty) \to \mathbb{N}$ defined by 
$M_{\frak f} (x) = \#\{a \in A : f(a) \le x\} = \sum_{n \le x} \# \frak f^{-1} (n)$\,.
Here $M_{\frak f}(x)$ counts each value of ${\frak f}$ at most $x$ according to its multiplicity.
Then $D_{\frak f} (x) \le M_{\frak f} (x)$ with equality when $\frak f$ is injective. 
Estimating $D_{\frak f}$ (or $M_{\frak f}$) is fundamental for many counting functions $\frak f$\,.
Here $\mathbb{N}$ (respectively, $\mathbb{P}$) denotes the non-negative (respectively, positive) integers. 
A set $S$ has cardinality $\#S$\,.

The distribution of values of number-theoretic functions is a central research area in number theory. 
The seminal example is the distribution of the prime numbers:
Let ${\frak p} : \mathbb{P} \to \mathbb{N}$ where ${\frak p}(n)$ is the $n^{th}$ prime. The function 
$\pi(x) = D_{\frak p}  (x) = M_{\frak p} (x)$ is the number of primes at most $x$\,.
Chebyshev \cite{Ch} determined the order of magnitude of $\pi(x) = \Theta(x/\ln x)$\,. 
(See {\it Mathematical Reviews} Mathematics Subject Classification 11N for many further examples.)

Combinatoric--theoretic numbers may be viewed as functions ${\frak f} : A \to \mathbb{N}$, where 
$A \subseteq \mathbb{N}^k$, and, hence, their distribution function $D_{\frak f}$ (or $M_{\frak f}$) investigated.
Very few have however: 
The function ${\frak b} : \mathbb{N}^2 \to \mathbb{N}$ where ${\frak b}(n,k) = \binom{n}{k}$ 
has distribution function $D_{\frak b}(x) = \sqrt{2x} + o \big(\sqrt{x}\,\big)$ (cf. [11; pp. 76--77]).
The distribution function $D_{\frak N}$ of the function
${\frak N} : \mathbb{P}^2 \to \mathbb{N}$ where the ${\frak N}(n,k)$ are the Narayana numbers was determined in  \cite{CD}.
%
Erd\"os and Niven \cite{EN} proved that the number of distinct multinomial coefficients at most $x > 0$ is 
$(1+\sqrt{2}\,)\sqrt{x} + o(\sqrt{x}\,)$  (see also \cite{A}).
The multiplicity problem was examined and best possible bounds for $\# \frak f^{-1} (n)$ proved 
for a large class of functions $\frak f$ including $\frak b, \frak N$ in \cite{Cl}. 
%

The Ramsey numbers, $r(k,l)$ where $k,l \in \mathbb{P}$ (cf. 
\cite{W}), are among the most important of combinatoric--theoretic numbers.   
Despite decades of effort our knowledge of the Ramsey numbers is quite limited. 
Only nine non--trivial values of $r(k,l)$, with $3 \le k,l \le 5$, and non-trivial bounds for certain other $r(k,l)$, with $3 \le k,l \le 19$, are known (cf. 
\cite{R}).
The best lower bounds for general $r(k,l)$ are \cite{B}, \cite{BK} while the best upper bounds for general $r(k,l)$ are \cite{CO} at present. 
%
They have very different orders of magnitude. The order of magnitude of only one infinite family of Ramsey numbers is known:
Kim \cite{K} proved that $r(3,n) = \Theta (n^2/\log n)$ by improving the lower bound $r(3,n) \ge c_1 n^2/\log^2 n$ of 
\cite{E} to match the upper bound $r(3,n) \le c_2 n^2/\log n$ of 
\cite{AKS}. 
See also \cite{GRS}.

The computational complexity of determining $r(k,l)$ is not known although clearly hard (cf. 
\cite{H}, 
\cite{S}). 
More is known about certain variations. 
Burr \cite{B1} proved that determining whether the graph Ramsey number $r(G,H) \le m$ is NP-hard.
He \cite{B2} proved that determining whether the arrow relation $F \rightarrow (G,H)$ holds is coNP-hard. 
Schaefer \cite{Sc} proved that determining whether $F \rightarrow (G,H)$ holds is $\Pi_2$--complete. 
A quantum algorithm in complexity class QMA for computing $r(k,l)$ was given by the authors \cite{GC} who proved that its solution can be found using adiabatic evolution
(see also 
\cite{BC}).

Trivially $r(1,n) = r(n,1) = 1$ $(n \ge 1)$ and 
$r(2,n) = r(n,2) = n$  $(n \ge 2)$. Hence every positive integer is a {\it trivial} Ramsey number.
We consider the {\it non-trivial} Ramsey numbers $r(k,n)$ where $3 \le k \le n$ since all $r(k,n) = r(n,k)$\,.
We note that $k_1 \le k_2$ and $n_1 \le n_2$ imply $r(k_1,n_1) \le r(k_2,n_2)$\,. 
The function ${\frak r} : A \to \mathbb{N}$, where $A = \{(k,n) \in \mathbb{P}^2 : 3 \le k \le n\}$, defined by 
${\frak r}(k,n) = r(k,n)$ gives the non-trivial Ramsey numbers. In this note we prove that its distribution function 
$D_{\frak r}(x) = \Theta (\sqrt{x \ln x}\,)$\,. 
It is surprising that this 
fundamental property of Ramsey numbers 
can be determined at present. 
Our result for Ramsey numbers is the 
analog of Chebyshev's \cite{Ch} result for prime numbers. 

\section{Distribution of the Ramsey Numbers}

There exists a constant $c > 0$ and an integer $N_1 \ge 4$, such that
\begin{flalign*}
&& && r(4,n) \ge c n^{5/2}/\ln^2 n && (n \ge N_1) &&& \tag{2.1}
\end{flalign*}   
(cf. \cite{B}). It follows that there exists a constant $1 \ge d > 0$ such that
\begin{flalign*}
&& && r(4,n) \ge d n^{9/4}\,. && (n \ge 4) &&& \tag{2.2}
\end{flalign*}  
Further $r(k,n) \ge r(4,n)$ for all $n \ge k \ge 4$\,. Hence
\begin{flalign*}
&& && r(k,n) \ge d n^{9/4}\,.  &&(n \ge k \ge 4) &&& \tag{2.3}
\end{flalign*}

Define
\begin{align*}
&R_k := \{r(k,n) : k \le n\} \quad\quad (k \ge 3) \tag{2.4a} \\  
&R := \{r(k,n) : 3 \le k \le n\} = \bigcup_{k=3}^{\infty} R_k\,. \tag{2.4b}
\end{align*} 
%
Hence
\begin{equation}
R_3 \subseteq R = R_3 \cup \bigcup_{k=4}^{\infty} R_k\,. \tag{2.5}
\end{equation}

For $x \in [0,\infty)$, define 
\begin{align*}
&R_k(x) := R_k \cap [0,x], &&r_k(x) := \# R_k(x) \tag{2.6a} \\ 
&R(x) := R \cap [0,x], &&r(x) := \# R(x)\,. \tag{2.6b} 
\end{align*}    
Then $R(x)$ is the set of non-trivial Ramsey numbers
and $r(x)$ is the number of non-trivial Ramsey numbers, ignoring their multiplicity, 
in the interval $[0,x]$\,. Notice that $r(x) = D_{\frak r}(x)$ from the introduction.
Then (2.5) gives
\begin{equation}
\quad R_3(x) \subseteq R(x) = R_3(x) \cup \bigcup_{k=4}^{\infty} R_k(x)\,, \tag{2.7}
\end{equation} 
hence,
\begin{equation}
r_3(x) \le r(x) \le r_3(x) + \sum_{k=4}^{\infty} r_k(x)\,. \tag{2.8}
\end{equation}

\begin{lemma}
Suppose $k \ge 4$. If $x \ge d^{81/16} k^{9/4}$, then 
$r_k(x) \le d^{-9/4} x^{4/9}$\,. 
\end{lemma}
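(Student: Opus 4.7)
The plan is to translate the lower bound (2.1) into an upper bound on the indices $n$ that can contribute to $R_k(x)$, and then count. Since $r(k,\cdot)$ is non-decreasing on $\{n : n \ge k\}$ by the monotonicity noted in the introduction, distinct elements of $R_k(x)$ come from distinct $n \ge k$, so $r_k(x) = \# R_k(x) \le \#\{n \ge k : r(k,n) \le x\}$.

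Next, I would apply (2.1), whose hypothesis is satisfied because $k \ge 4$. The condition $r(k,n) \le x$ forces $d n^{9/4} \le r(k,n) \le x$, hence $n \le d^{-4/9} x^{4/9}$. Combined with $n \ge k$, the number of eligible indices is at most $d^{-4/9} x^{4/9} - k + 1$.

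Finally, since $0 < d \le 1$, we have $d^{-4/9} \le d^{-9/4}$, so the count above is bounded by $d^{-9/4} x^{4/9} - k + 1$. The hypothesis $x \ge d^{81/16} k^{9/4}$ is equivalent, after taking $4/9$ powers, to $d^{-9/4} x^{4/9} \ge k$; this absorbs the $-k+1$ term cleanly and yields the claimed bound $r_k(x) \le d^{-9/4} x^{4/9}$.

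There is no real obstacle: the lemma is essentially a direct inversion of (2.1), and the hypothesis on $x$ is calibrated precisely so that the resulting clean bound $d^{-9/4} x^{4/9}$ dominates $k$. The only mild subtlety worth stating explicitly is the monotonicity of $r(k,\cdot)$, which lets us pass from counting distinct values of $r(k,n)$ to counting admissible indices $n$.
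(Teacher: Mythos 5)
Your proof is correct and follows essentially the same route as the paper: invert the lower bound (2.1) to show that only indices $n$ with $k \le n \le d^{-9/4}x^{4/9}$ can contribute to $R_k(x)$, then count them. The only cosmetic remark is that the appeal to monotonicity of $r(k,\cdot)$ is unnecessary --- the bound $\#R_k(x) \le \#\{n \ge k : r(k,n) \le x\}$ is just the statement that an image has cardinality at most that of its domain.
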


\begin{proof}
If $x \ge d^{81/16} k^{9/4}$, then $d^{-9/4} x^{4/9} \ge k$\,.
Suppose integer $l > d^{-9/4} x^{4/9}$ $(\ge k)$\,.
Inequality (2.3) gives $r(k,l) \ge d l^{9/4} > x$\,.
Hence $r(k,l) \notin R_k(x)$\,. This implies the result.
\end{proof} 

\begin{lemma}
Suppose $x \ge 8$\,. If $k \ge \lceil2 \log_2 x\rceil \, (\ge 3)$, then
$r_k(x) = 0$\,.
\end{lemma}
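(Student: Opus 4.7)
The plan is to reduce the lemma to Erd\H{o}s's classical $1947$ lower bound on diagonal Ramsey numbers, namely
$$
r(k,k) > 2^{k/2} \qquad (k \geq 3),
$$
together with the monotonicity $r(k,n) \geq r(k,k)$ for $n \geq k$ already noted in the introduction. Erd\H{o}s's bound is the standard probabilistic estimate: a uniformly random $2$-coloring of $K_n$ with $n = \lfloor 2^{k/2} \rfloor$ has expected number of monochromatic $K_k$ strictly below $1$ once $k \geq 3$, which forces $r(k,k) > n$.

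First, I would apply monotonicity to obtain $r(k,n) \geq r(k,k) > 2^{k/2}$ for every admissible pair with $k \geq 3$ and $n \geq k$. Second, the hypothesis $k \geq \lceil 2\log_2 x \rceil$ gives $k \geq 2\log_2 x$ and therefore $2^{k/2} \geq 2^{\log_2 x} = x$. Chaining these two inequalities yields
$$
r(k,n) > 2^{k/2} \geq x,
$$
so every value of $r(k,n)$ with $n \geq k$ lies strictly above $x$. Consequently $R_k(x) = \emptyset$ and $r_k(x) = 0$.

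The assumption $x \geq 8$ merely guarantees $\lceil 2\log_2 x \rceil \geq 6$, comfortably inside the range $k \geq 3$ where the Erd\H{o}s bound is available, so it plays no deeper role in the estimate. There is essentially no obstacle in this argument; the only mildly delicate point is that Erd\H{o}s's bound must be invoked in its \emph{strict} form $r(k,k) > 2^{k/2}$ (rather than $\geq$), in order to cover the edge case in which $2\log_2 x$ happens to be an integer and $2^{k/2}$ could otherwise equal $x$ exactly.
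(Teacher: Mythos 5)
Your proof is correct and follows exactly the paper's argument: monotonicity $r(k,n)\ge r(k,k)$, Erd\H{o}s's strict bound $r(k,k)>2^{k/2}$, and $2^{k/2}\ge 2^{\log_2 x}=x$ from $k\ge 2\log_2 x$. The extra remarks on the role of $x\ge 8$ and the necessity of strictness are accurate but the core reasoning is the same as in the paper.
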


\begin{proof} 
Suppose $k \ge \lceil2 \log_2 x\rceil$\,. Erd\"os' classic result gives 
$r(k,n) \ge r(k,k) > 2^{k/2} \ge 2^{\log_2 x} = x$\,. This implies the result.  
\end{proof}

The result of 
\cite{K} implies there exist positive constants $c_1 < c_2$ such that
\begin{flalign*}
&& && c_1 n^2/\ln n \le r(3,n) \le c_2 n^2/\ln n\,. && (n \ge 3) &&& \tag{2.9} 
\end{flalign*}
Inequality (2.9) implies there exists $x_1 \ge 6$ such that
\begin{flalign*}
&& && c_3 \sqrt{x \ln x} \le r_3 (x) \le  c_4 \sqrt{x \ln x} && (x \ge x_1) &&& \tag{2.10}
\end{flalign*}
where, say $c_3 = (4c_2)^{-1/2} > 0$ and $c_4 = c_1^{-1/2} > 0$, which is adequate for our needs.\\


\begin{theorem}
We have $r(x) = D_{\frak r}(x) = \Theta  \big( \sqrt{x \ln x}\,\big)$\,.
\end{theorem}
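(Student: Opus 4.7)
The plan is to obtain matching lower and upper bounds of order $\sqrt{x \ln x}$ using (2.4) as the basic decomposition, together with the three ingredients already assembled: the asymptotic behavior of $r_3(x)$ in (2.5), the tail-cutoff Lemma 2.2, and the polynomial-sized bound on $r_k(x)$ from Lemma 2.1.

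The lower bound is essentially free. Since $R_3(x) \subseteq R(x)$, inequality (2.4) gives $r(x) \ge r_3(x)$, and the left half of (2.5) then yields $r(x) \ge c_3 \sqrt{x \ln x}$ for all $x \ge x_1$. That handles the $\Omega$ side.

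For the upper bound I would use the right half of (2.4) and dispose of the sum $\sum_{k \ge 4} r_k(x)$ by first truncating and then applying the polynomial estimate term-by-term. By Lemma 2.2, $r_k(x) = 0$ whenever $k \ge \lceil 2 \log_2 x \rceil$, so the sum has at most $\lceil 2 \log_2 x \rceil$ nonzero terms. For each $k$ with $4 \le k < \lceil 2 \log_2 x \rceil$, one has $k^{9/4} = O\bigl( (\log x)^{9/4} \bigr)$, so the hypothesis $x \ge d^{81/16} k^{9/4}$ of Lemma 2.1 holds once $x$ exceeds some absolute threshold $x_2$; Lemma 2.1 then bounds each surviving $r_k(x)$ by $d^{-9/4} x^{4/9}$. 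Adding up yields
\[
\sum_{k=4}^{\infty} r_k(x) \;\le\; \lceil 2 \log_2 x \rceil \cdot d^{-9/4} x^{4/9} \;=\; O\bigl( x^{4/9} \ln x \bigr)
\]
for $x \ge \max(x_1, x_2)$.

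Combining this with the upper half of (2.5) gives $r(x) \le c_4 \sqrt{x \ln x} + O(x^{4/9} \ln x)$. Since $4/9 < 1/2$, the tail term is $o\bigl(\sqrt{x \ln x}\bigr)$ and is absorbed, producing $r(x) = O\bigl(\sqrt{x \ln x}\bigr)$ and hence $r(x) = \Theta\bigl(\sqrt{x \ln x}\bigr)$. There is no real obstacle here: all three preparatory lemmas have been set up precisely so that the $k = 3$ term dominates and the $k \ge 4$ contributions are negligible. The only point that needs a brief verification is confirming that the hypothesis of Lemma 2.1 is satisfied uniformly for all $k$ in the truncated range, but this follows immediately from the logarithmic bound on $k$ and the polynomial gap $x$ versus $(\log x)^{9/4}$.
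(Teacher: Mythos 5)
Your proposal is correct and follows essentially the same route as the paper's proof: lower bound from $r(x)\ge r_3(x)$ via (2.4) and (2.5), and upper bound by truncating the sum at $\lceil 2\log_2 x\rceil$ with Lemma 2.2, bounding each surviving term by $d^{-9/4}x^{4/9}$ with Lemma 2.1, and absorbing the resulting $O(x^{4/9}\log x)$ tail into $\sqrt{x\ln x}$. The only cosmetic difference is that the paper names the threshold explicitly by choosing $x_2$ with $x\ge 2^{9/4}d^{81/16}\log_2^{9/4}x$ for $x\ge x_2$, which is exactly the uniform verification of Lemma 2.1's hypothesis that you describe.
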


\begin{proof}
There exists $x_2 \ge 8$ such that 
\begin{flalign*}
&& && x \ge 2^{9/4} d^{81/16}\log_2^{9/4}x\,. && (\forall x \ge x_2) &&& \tag{2.11}
\end{flalign*}
Fix $x \ge \max\{x_1,x_2\}$ with $x_1$ from (2.10) and $x_2$ from (2.11). Then (2.8) and (2.10) give
\begin{equation}
c_3 \sqrt{x \ln x} \le r_3(x) \le r(x) \le r_3(x) + \sum_{k=4}^{\infty} r_k(x)
\le c_4 \sqrt{x \ln x} + \sum_{k=4}^{\infty} r_k(x)\,.  \tag{2.12}
\end{equation} 
Each $r_k(x) = 0$ for $k \ge \lceil2 \log_2 x\rceil$ by Lemma 2.2. Hence
\begin{equation}
\sum_{k=4}^{\infty} r_k(x) = \sum_{k=4}^{\lfloor2 \log_2 x\rfloor} r_k(x)\,.  \tag{2.13}
\end{equation}
If $x \ge 2^{9/4} d^{81/16}\log_2^{9/4} x$, then 
\begin{flalign*}
&& && x \ge d^{81/16}k^{9/4}\,. && (k = 4,\dots,\lfloor2 \log_2 x\rfloor) &&&\tag{2.14}
\end{flalign*}
Lemma 2.1 gives
\begin{equation} 
\sum_{k=4}^{\lfloor2 \log_2 x\rfloor} r_k(x)
\le \sum_{k=4}^{\lfloor2 \log_2 x\rfloor} d^{-9/4} x^{4/9}
\le  2 d^{-9/4} (\log_2 x) x^{4/9}\,. \tag{2.15} 
\end{equation}
For $x \ge \max\{x_1,x_2\}$, (2.12), (2.13) and (2.15) give
\[
c_3 \sqrt{x \ln x} \le r(x) \le c_4 \sqrt{x \ln x} + 2 d^{-9/4} (\log_2 x) x^{4/9}  \tag{2.16} 
\] 
which implies our result.  
\end{proof}

\vspace{-.1in}

\section{Conclusion}

We have proved that the number of non-trivial Ramsey numbers at most $x$ is 
$\Theta (\sqrt{x \ln x}\,)$\,. It is interesting that this fundamental fact can be determined at present. 
Our result for the Ramsey numbers is the analog of Chebyshev's \cite{Ch} result for the prime numbers.
The density of non-trivial Ramsey numbers is then roughly the square root of the density of prime numbers.

As noted in the introduction, very little work has been done to determine the distribution of other significant 
families of combinatoric-theoretic numbers. We think this is an interesting, and important, direction for future work.





\begin{thebibliography}{10}
 
\bibitem{AKS}
M. Ajtai, J. Koml\'os and E. Szemer\'edi, A Note on Ramsey Numbers,
{\em Journal of Combinatorial Theory, Series A} {\bf 29} (1980), no. 3, 354--360. 

\bibitem{A}
G.E. Andrews, A. Knopfmacher and B. Zimmermann, On the Number of Distinct Multinomial Coefficients,
{\em Journal of Number Theory} {\bf 118} (2006), no.1, 15--30.
 
\bibitem{BC}
Z. Bian, F. Chudak, W, Macready, L. Clark and F. Gaitan, Experimental Determination of Ramsey Numbers,
{\em Physical Review Letters} {\bf 111} (2013), no. 13, 130505--1 to 130505--6. 

\bibitem{B}
T. Bohman, The Triangle--Free Process,
{\em Advances in Mathematics} {\bf 221} (2009), no. 5, 1653--1677.

\bibitem{BK}
T. Bohman and P. Keevash, The Early Evolution of the $H$--Free Process,
{\em Inventiones Mathematicae} {\bf 181} (2010), no. 2, 291--336. 

\bibitem{B1}
S.A. Burr, Determining Generalized Ramsey Numbers is NP--hard,
{\em Ars Combinatoria} {\bf 17} (1984), 21--25.

\bibitem{B2}
S.A. Burr, On the Computational Complexity of Ramsey--Type Problems,
in {\em Mathematics of Ramsey Theory} (J. Ne\v{s}et\v{r}il and V. R\"odl, Eds.), Springer, Berlin (1990), pp. 46--52.

\bibitem{Ch}
P. Chebyshev, Memoire sur les Nombres Premiers,
{\em Journal de Mathematiques Pures et Appliquees} {\bf 17} (1852), 366--390.
 
\bibitem{Cl}
L. Clark, Multiplicities of Integer Arrays,
{\em Integers} {\bf 10} (2010), A 14, 187--199. 

\bibitem{CD}
L. Clark and Y. Dabab, The Distribution of Certain Combinatorial Arrays,
{\em Congressus Numerantium} {\bf 210} (2011), 33--39.

\bibitem{C}
L. Comtet, {\em Advanced Combinatorics}, D. Reidel, Boston, MA (1974).

\bibitem{CO}
D. Conlon, A New Upper Bound for Diagonal Ramsey Numbers,
{\em Annals of Mathematics} {\bf 170} (2009), no. 2, 941--960.

\bibitem{E}
P. Erd\"os, Graph Theory and Probability II,
{\em Candian Journal of Mathematics} {\bf 13} (1961), 346--352

\bibitem{EN}
P. Erd\"os and I. Niven, The Number of Multinomial Coefficients,
{\em American Mathematical Monthly} {\bf 61} (1954), no.1, 37--39.

\bibitem{GC}
F. Gaitan and L. Clark, Ramsey Numbers and Adiabatic Quantum Computing, {\em Physical Review Letters} {\bf 108} (2012), no. 1, 010501--1 to 010501--4.

\bibitem{GRS}
R. Graham, B. Rothschild and J. Spencer, {\em Ramsey Theory. Second Edition}, John Wiley $\&$ Sons, New York, NY (1990).

\bibitem{H}
H. Haanp\"a\"a, Computational Methods for Ramsey Numbers,
Helsinki University of Technology, Laboratory for Theoretical Computer Science, Research Report 65 (November 2000), 50 pages.

\bibitem{K}
J.H. Kim, The Ramsey Number $R(3,t)$ has Order of Magnitude $t^2/\log t$,
{\em Random Structures and Algorithms} {\bf 7} (1995), 173--207.

\bibitem{R}
S. Radziszowski, Small Ramsey Numbers, {\em Electronic Journal of Combinatorics} {\bf 1} (1994), Dynamic Survey DS1.

\bibitem{Sc}
M. Schaefer, Graph Ramsey Theory and the Polynomial Hierarchy, {\em Journal of Computer and System Sciences} {\bf 62} (2001), no. 2, 290--322.

\bibitem{S}
P. Schweitzer, Problems of Unknown Complexity--Graph Isomorphism and Ramsey Theoretic Numbers,
Ph.D. Dissertation, Universit\"at des Saarlandes, Saarbr\"ucken, Germany (24 July 2009), 144 pages.

\bibitem{W}
D. West, {\em Introduction to Graph Theory}, Prentice-Hall, Upper Saddle River, NJ (1996).



\end{thebibliography}
\end{document}